\documentclass[letterpaper,11pt,twoside,keywordsasfootnote,addressatend,noinfoline]{article}
\usepackage{fullpage}
\usepackage[english]{babel}
\usepackage{amssymb}
\usepackage{amsmath}
\usepackage{comment}
\usepackage{theorem}
\usepackage{epsfig}
\usepackage{subfigure}
\usepackage{mathrsfs}
\usepackage{color}
\usepackage{imsart}

\newtheorem{theorem}{Theorem}
\newtheorem{lemma}[theorem]{Lemma}

\newenvironment{proof}{\noindent{\bf Proof.}}{\hspace*{2mm}~$\square$}

\newcommand{\N}{\mathbb{N}}
\newcommand{\Z}{\mathbb{Z}}

\newcommand{\A}{\mathscr{A}}
\newcommand{\B}{\mathscr{B}}
\newcommand{\Lat}{\mathscr{L}}

\newcommand{\ind}{\mathbf{1}}
\newcommand{\ep}{\epsilon}
\newcommand{\n}{\hspace*{-5pt}}
\DeclareMathOperator{\card}{card}
\DeclareMathOperator{\bernoulli}{Bernoulli}
\DeclareMathOperator{\poisson}{Poisson}


\begin{document}

\begin{frontmatter}
\title     {On the extinction phase of the contact \\ process with an asymptomatic state}
\runtitle  {Contact process with an asymptomatic state}
\author    {Nicolas Lanchier}
\runauthor {Nicolas Lanchier}
\address   {School of Mathematical and Statistical Sciences \\ Arizona State University, Tempe, AZ 85287, USA. \\ nicolas.lanchier@asu.edu}

\maketitle

\begin{abstract} \ \
 The contact process with an asymptomatic state, introduced in~[Belhadji, Lanchier and Mercer, \emph{Stochastic Process. Appl.}, 176:104417, 2024], is a natural variant of the basic contact process that distinguishes between asymptomatic~(state~1) and symptomatic~(state~2) individuals.
 Infected individuals infect their healthy neighbors at rate~$\lambda_1$ when asymptomatic and at rate~$\lambda_2$ when symptomatic.
 Newly infected individuals are always asymptomatic and become symptomatic at rate~$\gamma$, and infected individuals recover at rate one regardless of whether they are asymptomatic or symptomatic.
 Belhadji, Lanchier and Mercer proved that, in the mean-field approximation, there is an epidemic if and only if~$\lambda_1 + \gamma \lambda_2 > 1 + \gamma$, showing in particular that, for all~$\gamma > 0$, there is an epidemic for~$\lambda_2$ sufficiently large.
 In contrast, comparing the process with a subcritical Galton-Watson branching process, they proved for the spatial model that, if~$\gamma < 1 / (4d - 1)$ and~$\lambda_1 = 0$, then there is no epidemic even in the limiting case~$\lambda_2 = \infty$.
 In this paper, we prove an exponential decay of the progeny of the Galton-Watson branching process, and use a block construction and a perturbation argument, to extend the extinction phase of the process to~$\lambda_1 > 0$ small.
\end{abstract}

\begin{keyword}[class=AMS]
\kwd[Primary ]{60K35}
\end{keyword}

\begin{keyword}
\kwd{Contact process; Galton-Watson branching process; Block construction; Perturbation argument; Oriented site percolation; Coupling.}
\end{keyword}

\end{frontmatter}


\section{Introduction}
\label{sec:intro}
 Harris' contact process~\cite{harris_1974} assumes that each site of the~$d$-dimensional integer lattice is occupied by an individual that is either healthy~(state~0) or infected~(state~1).
 Infected individuals infect one of their neighbors chosen uniformly at random at rate~$\lambda$, and recover at rate one.
 Belhadji, Lanchier and Mercer~\cite{belhadi_lanchier_mercer_2024} recently introduced a natural variant of the contact process, that we call the asymptomatic contact process, in which infected individuals are either asymptomatic~(state~1) or symptomatic~(state~2).
 The state of the process at time~$t$ is a spatial configuration
 $$ \xi_t : \Z^d \longrightarrow \{0, 1, 2 \} \quad \hbox{where} \quad \xi_t (x) = \hbox{state at site~$x$ at time~$t$}. $$
 Letting~$f_i (x, \xi)$ be the fraction of neighbors of site~$x$ that are in state~$i$, the local transition rates of the asymptomatic contact process at site~$x$ are given by
 $$ \begin{array}{rclcrcl}
      0 \to 1 & \hbox{at rate} & \lambda_1 f_1 (x, \xi) + \lambda_2 f_2 (x, \xi), & \quad & 1 \to 2 & \hbox{at rate} & \gamma, \vspace*{4pt} \\
      1 \to 0 & \hbox{at rate} & 1, & \quad & 2 \to 0 & \hbox{at rate} & 1. \end{array} $$
 The first transition indicates that infected individuals infect their neighbors at rate~$\lambda_1$ when asymptomatic and at rate~$\lambda_2$ when symptomatic, and that newly infected individuals are asymptomatic.
 The second transition indicates that asymptomatic individuals become symptomatic at rate~$\gamma$.
 The last two transitions indicate that infected individuals recover at rate one regardless of whether they exhibit the symptoms or not. \\
\indent
 One of the main results in~\cite{belhadi_lanchier_mercer_2024} shows a qualitative disagreement between the spatial model and its nonspatial mean-field model.
 Assuming that the population is spatially homogeneous, and letting~$u_i$ be the density of sites in state~$i$, the system is described by the mean-field model
 $$ u_1' = (\lambda_1 u_1 + \lambda_2 u_2)(1 - u_1 - u_2) - (\gamma + 1) u_1 \quad \hbox{and} \quad u_2' = \gamma u_1 - u_2. $$
 It is proved in~\cite[Section~2]{belhadi_lanchier_mercer_2024} that, for the mean-field model, there is an epidemic~(there is a globally stable interior fixed point) if and only if~$\lambda_1 + \gamma \lambda_2 > 1 + \gamma$, therefore
\begin{equation}
\label{eq:MF}
\gamma > 0 \ \hbox{and} \ \lambda_1 = 0 \ \Longrightarrow \ \hbox{epidemic for all} \ \lambda_2 > 1 + 1 / \gamma.
\end{equation}
 In contrast, it is proved in~\cite[Section~6]{belhadi_lanchier_mercer_2024} that, for the spatial model,
\begin{equation}
\label{eq:IPS}
\gamma < 1 / (4d - 1) \ \hbox{and} \ \lambda_1 = 0 \ \Longrightarrow \ \hbox{extinction} \ (\hbox{even if} \ \lambda_2 = \infty)
\end{equation}
 in the sense that, regardless of the initial configuration, the density of infected sites goes to zero as time goes to infinity.
 The proof relies on a coupling between the spatial model and a subcritical Galton-Watson branching process.
 The qualitative disagreement between~\eqref{eq:MF} and~\eqref{eq:IPS} is due to the presence of global versus local interactions.
 In the mean-field model, starting with a single symptomatic individual, as long as~$\lambda_2$ is large, this individual can infect a large number of individuals, and because~$\gamma > 0$, a positive fraction of these individuals become symptomatic before they recover and can, in turn, infect other individuals, and so on.
 In contrast, in the presence of local interactions, a single symptomatic individual can only infect its neighbors a limited number of times before it recovers even if~$\lambda_2 = \infty$, implying that it is likely that none of the neighbors will become symptomatic if in addition~$\gamma > 0$ is small, which leads to extinction when~$\lambda_1 = 0$.
 For a brief overview of the other results proved in~\cite{belhadi_lanchier_mercer_2024}, we refer to~\cite[Section~2.1.5]{lanchier_2024}. \\
\indent
 The main objective of this paper is to prove that~\eqref{eq:IPS} still holds when~$\lambda_1 > 0$ but smaller than a critical value that depends on~$\gamma$.
 More precisely,
\begin{theorem}
\label{th:extinction}
 For all~$\gamma < 1 / (4d - 1)$, there exists~$\lambda_1 (\gamma) > 0$ such that
 $$ \lambda_1 < \lambda_1 (\gamma) \ \Longrightarrow \ \hbox{extinction} \ (\hbox{even if~$\lambda_2 = \infty$)}. $$
\end{theorem}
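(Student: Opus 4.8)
The plan is to combine three ingredients: a quantitative, exponential‑tail version of the Galton--Watson comparison underlying~\eqref{eq:IPS}; a ``clan'' decomposition showing that, when~$\lambda_1$ is small, the process with~$\lambda_1 > 0$ is dominated by a subcritical branching collection of independent copies of the~$\lambda_1 = 0$ process; and a block construction comparing this structure with a subcritical oriented percolation in order to conclude extinction for an arbitrary initial configuration. Since the process is monotone in~$\lambda_2$, it suffices to treat the worst case~$\lambda_2 = \infty$.

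First I would revisit the coupling of~\cite{belhadi_lanchier_mercer_2024} between the spatial model with~$\lambda_1 = 0$, $\lambda_2 = \infty$ and a subcritical Galton--Watson process, and strengthen its conclusion from ``the total progeny is almost surely finite'' to ``the total progeny~$N$ has exponential tails'', i.e.~$\mathbb E [s^N] < \infty$ for some~$s > 1$ when~$\gamma < 1 / (4d - 1)$. The point is that the offspring distribution of that branching process has exponential moments: a symptomatic site infects a given neighbour only while that neighbour is healthy, and each re‑infection must then wait an independent~$\exponential (1 + \gamma)$ holding time while the symptomatic site itself survives only an~$\exponential (1)$ time, so the number of infections it produces along each of its~$2d$ incident edges is dominated by a geometric random variable, and the offspring count by a sum of finitely many geometric random variables. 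Since this branching process is subcritical precisely when~$\gamma < 1 / (4d - 1)$ by~\cite{belhadi_lanchier_mercer_2024}, the usual generating‑function argument for subcritical Galton--Watson processes yields the exponential tail for~$N$, hence also for the extinction time and for the spatial radius reached by the progeny, both of which are bounded by functions of~$N$ and of~$N$ independent holding times.

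Next comes the perturbation. Build the process with~$\lambda_2 = \infty$ and~$\lambda_1 > 0$ from a graphical representation in which the ``$\lambda_1 = 0$ dynamics'' (instantaneous infection of healthy neighbours of symptomatic sites, symptom marks at rate~$\gamma$, recovery marks at rate~$1$) is recorded separately from the extra ``$\lambda_1$‑infections'' emitted by asymptomatic sites at rate~$\lambda_1$. Decompose the infected population into clans: a clan is founded either by the initial configuration or by a~$\lambda_1$‑infection, and consists of all sites it reaches using only the~$\lambda_1 = 0$ dynamics. Declaring a new clan born whenever a~$\lambda_1$‑infection occurs, even if it targets an already infected site, only enlarges the set of infected sites, so the true process is dominated, in the sense of inclusion of the set of infected sites at all times, by a branching process whose individuals are independent copies of the~$\lambda_1 = 0$, $\lambda_2 = \infty$ process run from a single founder. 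A clan founds a new clan at rate~$\lambda_1$ per asymptomatic site, so the mean number of child clans is at most~$\lambda_1 \, \mathbb E [\int_0^\infty |A_t| \, dt] \leq \lambda_1 C_0$, where~$A_t$ is the set of asymptomatic sites of the clan at time~$t$ and~$C_0 = C_0 (\gamma, d) < \infty$ by the first step (for instance~$C_0 = \mathbb E [\tau N]$ with~$\tau$ the extinction time). Taking~$\lambda_1 (\gamma) := 1 / C_0 > 0$, the clan branching process is subcritical for~$\lambda_1 < \lambda_1 (\gamma)$; and since~$N$, $\tau$ and the radius all have exponential tails, so does the child‑clan count, so this branching process has exponentially small total progeny and exponentially small space--time footprint.

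Finally I would convert this into extinction. For a finite initial configuration the domination immediately gives that the set of sites ever infected is almost surely finite, so the density of infected sites goes to zero. For an arbitrary (in particular translation‑invariant) initial configuration I would run a block construction: tile~$\Z^d \times [0, \infty)$ into space--time blocks of a large side~$L$, and use the exponential tails above together with the smallness of~$\lambda_1$ to show that the probability that clan activity crosses a given block, given that it did not cross the neighbouring blocks in its backward light cone, is at most some~$\ep (L, \lambda_1)$ that can be made arbitrarily small; a standard comparison then dominates the ``active'' blocks by a subcritical oriented site percolation, which dies out, forcing the density of infected sites to vanish, as asserted in Theorem~\ref{th:extinction}. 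The main obstacle is making the clan decomposition and this collision‑ignoring domination rigorous in the singular regime~$\lambda_2 = \infty$: constructing the process and the clans unambiguously from the graphical representation when several symptomatic sites act simultaneously, and checking that the dominating branching process really does contain the true process at all times. The exponential‑tail estimate of the first step is exactly what guarantees both that~$\lambda_1 (\gamma) > 0$ and that the geometric‑series estimates in the block construction converge.
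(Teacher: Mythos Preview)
Your outline is broadly correct and shares the paper's first and last ingredients: the exponential-tail upgrade of the Galton--Watson comparison for $\lambda_1=0$, and a block comparison with subcritical oriented percolation to handle arbitrary initial data. The genuine difference is in the perturbation step. You pass from $\lambda_1=0$ to $\lambda_1>0$ \emph{before} the block construction, via the clan decomposition: $\lambda_1$-infections found new clans, each clan is an independent copy of the $\lambda_1=0$ process, and for $\lambda_1<1/C_0$ the clan tree is subcritical with exponential tails, which is what you then feed into the block estimate. The paper does the opposite: it first runs the entire block construction for $\lambda_1=0$, obtaining that a space--time block $\A_{m,n}$ is healthy with probability $\ge 1-\ep/2$ uniformly in the outside configuration, and only then perturbs by the one-line observation that on a \emph{fixed} block of side $O(K)$ the event ``no type-$1$ arrow at all'' has probability $\ge 1-\ep/2$ once $\lambda_1\le -\log(1-\ep/2)/(2K(4K+1)^d)$; on that event the $\lambda_1>0$ and $\lambda_1=0$ graphical representations agree in the block, so the healthy-block bound carries over verbatim.

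The paper's route is considerably simpler: it sidesteps entirely the issue you yourself flag as the main obstacle (making the collision-ignoring clan domination rigorous at $\lambda_2=\infty$, and arranging genuine independence between clans built from a common graphical representation), and it also avoids having to redo the block construction with the extra layer of clan branching. The price is that the resulting $\lambda_1(\gamma)$ is non-explicit and certainly far from optimal, since it is tuned to a block size $K(\ep,\gamma)$ chosen only to push a certain error below $\ep$. Your clan argument, if carried out, is more robust and in principle yields a cleaner threshold $\lambda_1(\gamma)=1/C_0$ tied directly to the expected space--time mass of a single $\lambda_1=0$ cluster; but for the purpose of proving Theorem~\ref{th:extinction} the paper's ``no $\lambda_1$-arrows in the block'' trick is both shorter and avoids the delicate coupling you would otherwise need.
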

 This result may look like only a small improvement, but its proof is significantly more difficult than the proof of~\eqref{eq:IPS}.
 Like in~\cite{belhadi_lanchier_mercer_2024}, the starting point is that the number of symptomatic individuals in the spatial model is dominated by a subcritical Galton-Watson branching process.
 We prove more generally an exponential decay of the progeny of the branching process, then an exponential decay of the number of symptomatic and asymptomatic individuals in the spatial model~(Section~\ref{sec:GW}).
 This is used to also prove an exponential decay of the space-time length of the infection paths, and then that healthy blocks for the process properly rescaled in space and time dominate the open sites of a percolation process with parameter arbitrarily close to one~(Section~\ref{sec:block}).
 Finally, we deduce the theorem by also using a perturbation argument and proving the lack of percolation of the closed sites, corresponding to blocks that are potentially infected~(Section~\ref{sec:extinction}).


\section{Graphical representation}
\label{sec:GR}
 The first step to study the spatial model is to construct the process graphically from collections of independent Poisson processes/exponential clocks.
 To construct the asymptomatic contact process, for each site~$x \in \Z^d$, and each neighbor~$y$ of site~$x$,
\begin{itemize}
\item
 Place a rate~$\lambda_1 / 2d$ exponential clock along~$\vec{xy}$, and draw
 $$ (x, t) \overset{1}{\xrightarrow{\hspace*{25pt}}} (y, t) \quad \hbox{at the times~$t$ the clock rings} $$
 to indicate that if there is a~1 at the tail~$x$ of the arrow and a~0 at the head~$y$ of the arrow at time~$t-$, then site~$y$ becomes type~1 at time~$t$. \vspace*{5pt}
\item
 Place a rate~$\lambda_2 / 2d$ exponential clock along~$\vec{xy}$, and draw
 $$ (x, t) \overset{2}{\xrightarrow{\hspace*{25pt}}} (y, t) \quad \hbox{at the times~$t$ the clock rings} $$
 to indicate that if there is a~2 at the tail~$x$ of the arrow and a~0 at the head~$y$ of the arrow at time~$t-$, then site~$y$ becomes type~2 at time~$t$. \vspace*{5pt}
\item
 Place a rate~$\gamma$ exponential clock at~$x$, and put a~$\bullet$ at~$(x, t)$ at the times~$t$ the clock rings to indicate that a~1 at site~$x$ at time~$t-$ becomes a~2 at time~$t$. \vspace*{5pt}
\item
 Place a rate one exponential clock at~$x$, and put a~$\times$ at~$(x, t)$ at the times~$t$ the clock rings to indicate that a~1 or a~2 at site~$x$ at time~$t-$ becomes a~0 at time~$t$.
\end{itemize}
 Percolation results due to Harris~\cite{harris_1978} imply that the interacting particle system starting from any initial configuration can be constructed using the graphical representation above.


\section{Coupling with a Galton-Watson branching process}
\label{sec:GW}
 The objective of this section is to prove that the number of infections in the asymptomatic contact process starting with a single symptomatic individual is dominated by a subcritical Galton-Watson branching process, which decays exponentially whenever
\begin{equation}
\label{eq:assumption}
\gamma < 1 / (4d - 1) \quad \hbox{and} \quad \lambda_1 = 0.
\end{equation}
 To state our result, let~$\xi^{(0, 0)}$ be the process starting with a single symptomatic individual at the origin in an otherwise healthy population.
 Let~$\pi_1$ be the number of times a~0 turns into a~1, and~$\pi_2$ be the number of times a~1 turns into a~2, including the first symptomatic individual:
 $$ \begin{array}{r}
        \pi_1 = \card \{(x, t) \in \Z^d \times (0, \infty) : \xi_{t-}^{(0, 0)} (x) = 0 \ \hbox{and} \ \xi_t^{(0, 0)} (x) = 1 \}, \vspace*{4pt} \\
    \pi_2 = 1 + \card \{(x, t) \in \Z^d \times (0, \infty) : \xi_{t-}^{(0, 0)} (x) = 1 \ \hbox{and} \ \xi_t^{(0, 0)} (x) = 2 \}. \end{array} $$
 Having a Galton-Watson branching process~$X = (X_n)_{n \geq 0}$, we also let
 $$ \pi = \hbox{progeny of the branching process} = X_0 + X_1 + \cdots + X_n + \cdots $$
\begin{lemma}
\label{lem:offspring}
 Let~$\lambda_1 = 0$.
 Then, $\pi_2$ is dominated by the progeny~$\pi$ of the Galton-Watson branching process whose offspring distribution~$Y$ has probability-generating function
 $$ G_Y (s) = \frac{\gamma + 1}{\gamma + 1 + 2d \gamma (1 - s)} \,\bigg(\frac{\gamma s + 1}{\gamma + 1} \bigg)^{2d}.  $$
\end{lemma}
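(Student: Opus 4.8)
The plan is to construct an explicit coupling between the genealogy of symptomatic individuals in $\xi^{(0,0)}$ and a Galton-Watson tree, tracking the "pending" infections that might eventually lead to new symptomatic individuals. When $\lambda_1 = 0$, only symptomatic (state~2) individuals transmit the infection, so a newly infected individual (which is always type~1, i.e.\ asymptomatic) only matters to the spread if it becomes symptomatic before recovering. The natural idea is to attach to each symptomatic individual a "litter" of offspring: the individuals it directly infects via $\xrightarrow{2}$ arrows \emph{that in turn become symptomatic}. One must be slightly careful because the graphical representation uses arrows landing on sites that are already occupied, and because the limiting case $\lambda_2 = \infty$ is allowed — but at rate $\lambda_2 = \infty$ a symptomatic individual instantaneously infects all currently-healthy neighbors and then can re-infect each neighbor every time that neighbor recovers, so the number of type-$0\to1$ events caused by one symptomatic individual at a fixed site is governed only by the $\times$-clocks and the $\bullet$/$\times$-clock of that symptomatic individual itself. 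This is why the bound does not depend on $\lambda_2$.

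**First I would** compute the offspring law $Y$ for a single symptomatic individual. Consider one symptomatic individual sitting at a site; it survives for an $\exponential(1)$ time (its $\times$-clock). During that lifetime, focus on one of its $2d$ neighbors. That neighbor, whenever healthy and hit by a $\xrightarrow{2}$ arrow, becomes type~1; it then becomes type~2 (a genuine new symptomatic individual, i.e.\ an "offspring") if its $\bullet$-clock (rate $\gamma$) rings before its own $\times$-clock (rate~1) rings; otherwise it recovers and, if $\lambda_2 = \infty$, is immediately re-infected, so we get another independent race, and so on. Thus, conditional on the parent being alive, a single neighbor produces a $\bernoulli(\gamma/(\gamma+1))$ number of symptomatic offspring \emph{per re-infection cycle}; summing the geometric number of cycles against the parent's $\exponential(1)$ lifetime, one finds that the probability a given neighbor ever contributes at least one symptomatic offspring, and more precisely the generating function of the number it contributes. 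I expect the clean way to get the stated $G_Y$ is: the parent's lifetime is split by the "relevant clock rings" (neighbor-recovery events $\times$, which at rate $2d$ across the $2d$ neighbors compete with the parent's own death at rate~$1$ and the parent's $\bullet$ at rate $\gamma$ — actually once symptomatic the parent's $\bullet$ is irrelevant). Writing $N$ for the number of such cycles, $N+1$ is geometric, and each of the $2d$ neighbors independently has chance $\gamma/(\gamma+1)$ of yielding an offspring in each cycle it is present — leading to the binomial factor $((\gamma s + 1)/(\gamma+1))^{2d}$ and the rational factor $(\gamma+1)/(\gamma+1+2d\gamma(1-s))$ from summing over $N$. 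The exponent $2d$ and the appearance of $2d\gamma$ make this structure transparent.

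**Next I would** establish the domination $\pi_2 \preceq \pi$ rigorously via an explicit injection of the symptomatic-genealogy into the Galton-Watson tree, then \textbf{the main obstacle:} proving that this really is a domination despite the fact that in the true process a neighbor may already be occupied by a type~1 or type~2 individual (so the $\xrightarrow{2}$ arrow is "wasted" rather than producing a fresh infection), and despite the fact that the genealogy of the real process is a subgraph of a tree that may have \emph{fewer} symptomatic births than the idealized construction in which we optimistically assume every neighbor is healthy at every opportunity and $\lambda_2 = \infty$. The standard route is a coupling/monotonicity argument: one builds the Galton-Watson tree on the same probability space as the graphical representation, assigns to each symptomatic individual the $2d$ neighbor-sites and reads off the $\times$- and $\bullet$-clocks there, and shows that every $1\to2$ event in $\xi^{(0,0)}$ corresponds to a distinct node in the constructed tree, so that $\pi_2 \le \pi$ pointwise. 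The care needed is: (i) the first symptomatic individual is the root, accounting for the "$1+$" in the definition of $\pi_2$ and $X_0 = 1$; (ii) handling the $\lambda_1 = 0$ hypothesis, which ensures that only type~2 individuals create new type~1's, so the tree structure of "who made whom symptomatic" is well-defined; and (iii) checking independence of litters — the clocks at disjoint (site, time-interval) regions are independent Poisson processes, but the genealogical intervals of two symptomatic individuals need not be disjoint, so one must use the dominating construction where each putative individual gets its own fresh independent clocks, which only \emph{adds} offspring. Once the pointwise inequality $\pi_2 \le \pi$ is in place on the coupled space, the lemma follows immediately.

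**One remark on the verification of $G_Y$:** I would double-check the formula by evaluating $G_Y(1) = 1$ (it does), and by checking the mean $G_Y'(1) = 2d\gamma/(\gamma+1) + 2d\gamma/(\gamma+1) \cdot \big(1/(\gamma+1)\big)^{?}$ — more simply, $G_Y'(1) = \dfrac{2d\gamma}{\gamma+1} + \dfrac{2d\gamma}{\gamma+1} = \dfrac{4d\gamma}{\gamma+1}$, which is $<1$ exactly when $\gamma < 1/(4d-1)$, matching assumption~\eqref{eq:assumption} and confirming subcriticality; this sanity check is exactly what makes the exponential-decay conclusion of the surrounding section available, so I would include it as the closing line of the proof.
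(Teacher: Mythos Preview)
Your approach is essentially the same as the paper's: both define the genealogy of symptomatic individuals, bound the number of type-$1$ infections caused by a single type-$2$ individual by $2d + N$ (the $2d$ initial healthy windows plus the $N$ neighbor-recovery marks during the parent's $\exponential(1)$ lifetime, with $N$ shifted geometric of mean $2d$), thin each by an independent $\bernoulli(\gamma/(\gamma+1))$, and compose $G_N$ with $G_{Y_j}$ to obtain the stated $G_Y$. The only difference is emphasis---the paper writes out the generating-function algebra explicitly while being terse on the domination, whereas you are more explicit about the coupling issues (wasted arrows, fresh independent clocks) and sketchier on the algebra; both are correct, and your closing sanity check $G_Y'(1)=4d\gamma/(\gamma+1)$ is exactly what the paper records immediately after the lemma.
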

\begin{proof}
 Each time a~2 at site~$x$ gives birth to a~1 at site~$y$, which then turns into a~2 before it turns into a~0, we think of the~2 at site~$x$ as the parent of the~2 at site~$y$, and we think of the~2 at site~$y$ as the child of the~2 at site~$x$.
 This induces a partition of the~2s into generations, so to prove the stochastic domination, it suffices to prove that the number of children of each~2 is dominated by the random variable~$Y$.
 Now, the number of~1s produced by a single~2 at site~$x$ is bounded by the number of healthy time windows in the neighborhood of~$x$, which is equal to~$2d$ plus the number~$N$ of recovery marks in the neighborhood of~$x$ before the next recovery mark at site~$x$.
 Because all the individuals recover at rate one, the superposition property implies that~$N$ is the shifted geometric random variable with mean~$2d$, whose probability mass function is
\begin{equation}
\label{eq:offspring-1}
  P (N = i) = \bigg(\frac{2d}{2d + 1} \bigg)^i \bigg(\frac{1}{2d + 1} \bigg) \quad \hbox{for} \quad i = 0, 1, 2, \ldots
\end{equation}
 In addition, the~1s turn into a~2 before they turn into a~0 independently with the same success probability~$\gamma / (\gamma + 1)$ so, letting~$Y_j$ be independent~$\bernoulli (\gamma / (\gamma + 1))$, the number of~2s in the interacting particle system is stochastically dominated by the progeny of the Galton-Watson branching process with offspring distribution
\begin{equation}
\label{eq:offspring-2}
  Y = Y_1 + Y_2 + \cdots + Y_{2d + N} \quad \hbox{where} \quad N \ \hbox{is given by~\eqref{eq:offspring-1}}.
\end{equation}
 To compute the probability-generating function of~$Y$, note that the probability-generating function of the shifted geometric random variable~$N$ is given by
\begin{equation}
\label{eq:offspring-3}
\begin{array}{rcl}
\displaystyle G_N (s) & \n = \n &
\displaystyle E (s^N) = \sum_{i = 0}^{\infty} \,s^i P (N = i) = \sum_{i = 0}^{\infty} \,s^i \,\bigg(\frac{2d}{2d + 1} \bigg)^i \bigg(\frac{1}{2d + 1} \bigg) \vspace*{8pt} \\ & \n = \n &
\displaystyle \frac{1}{2d + 1} \ \sum_{i = 0}^{\infty} \bigg(\frac{2ds}{2d + 1} \bigg)^i = \frac{1 / (2d + 1)}{1 - 2ds / (2d + 1)} = \frac{1}{1 + 2d (1 - s)}, \end{array}
\end{equation}
 while the (common) probability-generating function of the~$Y_j$'s is given by
\begin{equation}
\label{eq:offspring-4}
  G_{Y_j} (s) = E (s^{Y_j}) = P (Y_j = 0) + s P (Y_j = 1) = \frac{1}{1 + \gamma} + \frac{\gamma s}{1 + \gamma} = \frac{\gamma s + 1}{\gamma + 1}.
\end{equation}
 Conditioning on~$N$, and using~\eqref{eq:offspring-2} and independence, we get
 $$ \begin{array}{rcl}
    \displaystyle G_Y (s) = E (s^Y) & \n = \n &
    \displaystyle E (E (s^Y \,| \,N)) = \sum_{i = 0}^{\infty} \,E (s^{Y_1 + Y_2 + \cdots + Y_{2d + i}}) \,P (N = i) \vspace*{0pt} \\ & \n = \n &
    \displaystyle \sum_{i = 0}^{\infty} \ (G_{Y_j} (s))^{2d + i} \,P (N = i) = (G_{Y_j} (s))^{2d} \,G_N (G_{Y_j} (s)). \end{array} $$
 Finally, combining~\eqref{eq:offspring-3}--\eqref{eq:offspring-4}, we deduce that
 $$ G_Y (s) = \frac{(G_{Y_j} (s))^{2d}}{1 + 2d (1 - G_{Y_j} (s))} = \frac{\gamma + 1}{\gamma + 1 + 2d \gamma (1 - s)} \,\bigg(\frac{\gamma s + 1}{\gamma + 1} \bigg)^{2d}, $$
 which completes the proof.
\end{proof} \vspace*{8pt} \\
 Note that the expected value of~$Y$ in the previous lemma is given by
 $$ E (Y) = E (Y_1 + Y_2 + \cdots + Y_{2d + N}) = E (2d + N) E (Y_j) = 4d \gamma / (\gamma + 1). $$
 In particular, the expected value is less than one, and the Galton-Watson branching process is subcritical and dies out with probability one, when~$\gamma < 1 / (4d - 1)$.
 The next lemma shows more generally an exponential decay of the progeny of the branching process.
\begin{lemma}
\label{lem:progeny}
 Let~$\gamma < 1 / (4d - 1)$.
 Then, there exist~$C_1 < \infty$ and~$s_1 > 1$ such that
 $$ P (\pi > K) \leq C_1 s_1^{-K} \quad \hbox{for all} \quad K \in \N. $$
\end{lemma}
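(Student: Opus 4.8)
The plan is to bound the progeny $\pi$ of a subcritical Galton-Watson process via a standard exponential-moment (large-deviation) argument based on the probability-generating function of $Y$ computed in Lemma~\ref{lem:offspring}. The key classical fact is that if $X$ is a Galton-Watson process with offspring p.g.f. $G_Y$ and $\pi = \sum_{n \geq 0} X_n$ is its total progeny starting from $X_0 = 1$, then the p.g.f. of $\pi$, call it $H(s) = E(s^\pi)$, satisfies the fixed-point equation $H(s) = s\,G_Y(H(s))$. So the first step is to show that $H(s_1)$ is finite for some $s_1 > 1$; once that is done, Markov's inequality gives $P(\pi > K) = P(s_1^\pi > s_1^K) \leq s_1^{-K}\,E(s_1^\pi) = C_1 s_1^{-K}$ with $C_1 = H(s_1) < \infty$, which is exactly the claimed bound.

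To see that $H$ extends past $1$, I would argue as follows. Since $\gamma < 1/(4d-1)$, we have $E(Y) = 4d\gamma/(\gamma+1) < 1$, so the process is strictly subcritical and $H(1) = 1$ (extinction is almost sure). Moreover $G_Y$ is analytic at $s = 1$ — indeed, from the explicit formula
$$ G_Y (s) = \frac{\gamma + 1}{\gamma + 1 + 2d \gamma (1 - s)} \,\bigg(\frac{\gamma s + 1}{\gamma + 1} \bigg)^{2d}, $$
the only singularity is the simple pole at $s = 1 + (\gamma+1)/(2d\gamma) > 1$, and the factor $((\gamma s + 1)/(\gamma+1))^{2d}$ is entire; hence $G_Y$ is analytic and increasing on a neighborhood of $s = 1$ with $G_Y(1) = 1$ and $G_Y'(1) = E(Y) < 1$. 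Consider the function $\Phi(s, h) = s\,G_Y(h) - h$. We have $\Phi(1, 1) = 0$ and $\partial_h \Phi(1,1) = G_Y'(1) - 1 = E(Y) - 1 < 0$, so by the implicit function theorem there is an analytic solution $h = H(s)$ of $\Phi(s, H(s)) = 0$ in a neighborhood of $s = 1$ with $H(1) = 1$; by uniqueness of the fixed point in $[0,1]$ for $s \leq 1$ this analytic branch agrees with the probability-generating function of $\pi$ for $s \in (0,1]$, and it continues to values $s = s_1 > 1$ slightly larger than $1$ where it stays finite (one can check $H'(1) = 1/(1 - E(Y)) > 0$, so $H$ is increasing through $1$ and takes finite values just above). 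Fixing any such $s_1 \in (1, 1 + \delta)$ gives $C_1 := H(s_1) = E(s_1^\pi) < \infty$.

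Since Lemma~\ref{lem:offspring} shows $\pi_2$ is stochastically dominated by $\pi$, the same tail bound $P(\pi_2 > K) \leq C_1 s_1^{-K}$ follows immediately, which is what the later sections will actually use. The only mild subtlety — the main point to get right — is justifying that the analytic continuation of $H$ past $s = 1$ genuinely coincides with the generating function $E(s^\pi)$ (equivalently, that $E(s_1^\pi) < \infty$ and equals $H(s_1)$): this is where one uses monotone convergence, writing $E(s^\pi) = \lim_{N} E(s^{\pi \wedge \tau_N}\ind\{\pi \leq N\})$ or, more cleanly, using that for $1 < s < s_1$ the partial-generation generating functions $H_n(s) = E(s^{X_0 + \cdots + X_n})$ increase to a limit satisfying the fixed-point equation, and comparing this limit to the analytic branch constructed above to conclude it is finite and equals $H(s)$. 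An alternative, completely elementary route that avoids analyticity altogether is to prove directly that $E(s_1^\pi) < \infty$ for $s_1$ close enough to $1$ by a subadditivity/recursion argument: conditioning on $Y$ and using that the progenies of the $Y$ subtrees are i.i.d. copies of $\pi$, one has $E(s^\pi) = s\,E(G_Y\text{-composition})$ and a contraction-mapping argument on the interval $[1, M]$ shows the fixed point equation $h = s\,G_Y(h)$ has a finite solution for $s$ slightly above $1$ provided $G_Y$ is finite slightly above $1$ — which it is, by the explicit formula, on $[1, 1 + (\gamma+1)/(2d\gamma))$. Either way the conclusion is the stated geometric tail bound with the constants $C_1 = H(s_1)$ and $s_1 > 1$.
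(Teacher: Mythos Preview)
Your proposal is correct and follows essentially the same route as the paper: both use the functional equation $G_\pi(s) = s\,G_Y(G_\pi(s))$ from Harris, establish that $G_\pi$ extends analytically to some $s_1 > 1$ (the paper by differentiating the implicit equation and checking the derivative is finite at $s=1$, you by invoking the implicit function theorem directly on $\Phi(s,h) = s\,G_Y(h) - h$ using $\partial_h\Phi(1,1) = E(Y) - 1 < 0$), and then finish with Markov's inequality. If anything, your version is slightly more careful in flagging the identification of the analytic branch with $E(s^\pi)$ for $s>1$, which the paper's ``$|G_\pi'(1)| < \infty$ hence $G_\pi(s_1) < \infty$'' step glosses over; a clean way to close that gap is Pringsheim's theorem (the p.g.f.\ has nonnegative coefficients, so if $s=1$ is not a singularity, the radius of convergence exceeds $1$).
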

\begin{proof}
 According to Harris~\cite[Section~1.13]{harris_1963}, the probability-generating function~$G_{\pi}$ of the progeny of the Galton-Watson branching process with offspring distribution~$Y$ is related to the probability-generating function~$G_Y$ of the offspring distribution through the equation
 $$ G_{\pi} (s) = s \,G_Y (G_{\pi} (s)). $$
 Recalling~$G_Y$ from Lemma~\ref{lem:offspring}, this becomes
 $$ (\gamma + 1)^{2d - 1} (\gamma + 1 + 2d \gamma (1 - G_{\pi} (s))) \,G_{\pi} (s) = s (\gamma G_{\pi} (s) + 1)^{2d}. $$
 Taking the derivative on both sides,
 and solving for~$G_{\pi}' (s)$, we get
 $$ G_{\pi}' (s) = \frac{(\gamma G_{\pi} (s) + 1)^{2d}}{(\gamma + 1)^{2d - 1} (\gamma + 1 - 2d \gamma G_{\pi} (s)) - 2d \gamma s (\gamma G_{\pi} (s) + 1)^{2d - 1}}. $$
 Taking~$s = 1$ and using that~$G_{\pi} (1) = 1$ when~$\gamma < 1 / (4d - 1)$, we get
 $$ \begin{array}{rcl}
    \displaystyle |G_{\pi}' (1)| & \n = \n &
    \displaystyle \bigg|\frac{(\gamma + 1)^{2d}}{(\gamma + 1)^{2d - 1} (\gamma + 1 - 2d \gamma) - 2d \gamma (\gamma + 1)^{2d - 1}} \bigg| \vspace*{8pt} \\ & \n = \n &
    \displaystyle \bigg|\frac{\gamma + 1}{\gamma + 1 - 4d \gamma} \bigg| < \infty, \end{array} $$
 showing that there exists~$s_1 = s_1 (\gamma, d) > 1$ such that~$C_1 = G_{\pi} (s_1)$ is finite.
 Finally, because~$s_1 > 1$, it follows from Markov's inequality that
 $$ P (\pi > K) = P (s_1^{\pi} > s_1^K) \leq E (s_1^{\pi}) / s_1^K = G_{\pi} (s_1) \,s_1^{-K} = C_1 s_1^{-K}, $$
 which proves an exponential decay of the progeny.
\end{proof} \vspace*{8pt} \\
 Using the stochastic domination in Lemma~\ref{lem:offspring} and the exponential decay of the progeny of the branching process in Lemma~\ref{lem:progeny}, we now deduce an exponential decay of the number of infected individuals in the interacting particle system.
\begin{lemma}
\label{lem:infected}
 Assume~\eqref{eq:assumption}.
 Then, there exist~$C_2 < \infty$ and~$s_2 > 1$ such that
 $$ P (\pi_1 > K) \leq C_2 s_2^{-K} \quad \hbox{for all} \quad K \in \N. $$
\end{lemma}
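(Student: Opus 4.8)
The plan is to bound~$\pi_1$ by a functional of the Galton-Watson tree underlying Lemma~\ref{lem:offspring} and then run the same generating-function argument as in Lemma~\ref{lem:progeny}. In the coupling from the proof of Lemma~\ref{lem:offspring}, each~2 at a site~$x$ produces at most~$2d + N$ new~1s, where~$N$ is the shifted geometric variable~\eqref{eq:offspring-1} counting the recovery marks in the neighborhood of~$x$ before the next recovery mark at~$x$, and each such~1 independently becomes a~2 with probability~$\gamma / (\gamma + 1)$. Treating the~$2d + N$ potential infections as offspring attempts, and padding the Bernoulli thinning with fresh variables for the attempts that do not correspond to an actual~1 in the particle system, we obtain a bona fide Galton-Watson tree~$T$ in which each node~$v$ carries an independent copy~$N_v$ of~\eqref{eq:offspring-1} and has offspring~$Y_v = \sum_{i = 1}^{2d + N_v} B_{v, i}$ with the~$B_{v, i}$ independent~$\bernoulli (\gamma / (\gamma + 1))$; moreover the~2s of the particle system embed in~$T$, the one embedding a node~$v$ producing at most~$2d + N_v$ ones. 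Summing over the~2s of the particle system and then over all of~$T$ gives, in this coupling,
$$ \pi_1 \ \leq \ S \ := \ \sum_{v \in T} (2d + N_v), $$
and since~$T$ is almost surely finite when~$\gamma < 1 / (4d - 1)$ we have~$S < \infty$ almost surely. It therefore suffices to prove an exponential tail bound for~$S$.

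Let~$H (s) = E (s^S)$, write~$g (u) = E (u^{B_{v, i}}) = (\gamma u + 1) / (\gamma + 1)$, and recall from~\eqref{eq:offspring-3} that~$G_N (u) = 1 / (1 + 2d (1 - u))$. Splitting~$S$ into the contribution~$2d + N_{\mathrm{root}}$ of the root and the independent contributions of the subtrees rooted at its children, conditioning on~$N_{\mathrm{root}}$ and on the thinning variables at the root, and using independence, one obtains the functional equation
$$ H (s) = (s \,g (H (s)))^{2d} \,G_N (s \,g (H (s))) = \frac{(s \,g (H (s)))^{2d}}{1 + 2d \,(1 - s \,g (H (s)))}, $$
which is consistent with~$H (1) = 1$ because~$g (1) = G_N (1) = 1$. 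This is the exact analogue of the relation~$G_{\pi} (s) = s \,G_Y (G_{\pi} (s))$ used in Lemma~\ref{lem:progeny}.

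To finish, rewrite the functional equation as~$\Phi (s, H (s)) = 0$ with~$\Phi (s, H) = (2d + 1) H - 2d \,s \,H \,g (H) - (s \,g (H))^{2d}$. Since~$g$ is affine with~$g (1) = 1$ and~$g' \equiv \gamma / (\gamma + 1)$, a short computation gives
$$ \frac{\partial \Phi}{\partial H} (1, 1) = (2d + 1) - 2d \,(1 + \gamma / (\gamma + 1)) - 2d \,\gamma / (\gamma + 1) = 1 - \frac{4d \gamma}{\gamma + 1}, $$
which is strictly positive when~$\gamma < 1 / (4d - 1)$. The implicit function theorem then shows that the power series~$H$, finite on~$[0, 1]$ and solving the functional equation there, extends analytically across~$s = 1$, so by Pringsheim's theorem its radius of convergence exceeds~$1$. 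Choosing~$s_2 > 1$ below that radius and setting~$C_2 = H (s_2) = E (s_2^S) < \infty$, Markov's inequality yields
$$ P (\pi_1 > K) \ \leq \ P (S > K) = P (s_2^S > s_2^K) \ \leq \ E (s_2^S) \,s_2^{-K} = C_2 \,s_2^{-K} \qquad \hbox{for all} \quad K \in \N, $$
as desired.

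The main obstacle is the first step: one must make sure that enlarging the number of~1s produced by each~2 to the upper bound~$2d + N_v$ and padding the thinning with fresh Bernoulli variables really produces a coupling in which~$\pi_1 \leq S$ holds pointwise while the pairs~$(N_v, Y_v)$ remain independent across~$T$ with the stated joint law; the recovery marks seen by different~2s of the particle system overlap, so the domination has to be set up through a breadth-first exploration that feeds fresh randomness to each newly visited~2. Once this is done, the remaining steps are routine and parallel Lemma~\ref{lem:progeny}. Alternatively, one could view~$\pi_1 + \pi_2$ as the total progeny of a two-type Galton-Watson process on the types~$\{1, 2\}$, whose mean matrix has spectral radius~$\sqrt{4d \gamma / (\gamma + 1)} < 1$, and invoke the exponential tail of the total progeny of a subcritical multitype process; the single-type functional equation above is chosen because it stays closest to the computations already carried out.
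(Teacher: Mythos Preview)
Your argument is correct and at the same level of rigor as the paper's (the coupling issue you flag at the end is present in both proofs and is handled in exactly the way you indicate), but the route is genuinely different from what the paper does.

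The paper does not introduce the aggregate variable~$S$ or any functional equation for it. Instead it splits on the size of~$\pi_2$: set~$L = K/6d$, bound~$P(\pi_2 > L)$ by Lemmas~\ref{lem:offspring}--\ref{lem:progeny}, and on~$\{\pi_2 \le L\}$ dominate~$\pi_1$ by the finite sum~$\bar N = \sum_{j=1}^{L}(2d + N_j)$ with the~$N_j$ i.i.d.\ shifted geometric. The PGF of~$\bar N$ is explicit, and the paper simply evaluates it at the concrete point~$s_0 = 1 + 1/4d$, checks by hand that~$(1 + 2d(1 - s_0))\,s_0^{4d} > 1$, and finishes with Markov's inequality. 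Your approach packages both~$\pi_2$ and the per-individual infection counts into the single tree functional~$S = \sum_{v \in T}(2d + N_v)$, derives the functional equation~$H(s) = (s\,g(H(s)))^{2d} G_N(s\,g(H(s)))$, and then appeals to the implicit function theorem together with Pringsheim to push the radius of convergence past~$1$. This is cleaner conceptually---one object, one equation, and the subcriticality condition~$\gamma < 1/(4d-1)$ appears transparently as the nonvanishing of~$\partial_H \Phi(1,1) = 1 - 4d\gamma/(\gamma+1)$---but it trades explicit constants for analytic machinery. The paper's version is more elementary and yields usable numerical values for~$C_2, s_2$; yours would generalize more readily (for instance to the two-type formulation you mention at the end).
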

\begin{proof}
 Let~$L = K / 6d$.
 Then, on the event~$\pi_2 \leq L$, the argument in the proof of Lemma~\ref{lem:offspring} implies that the number of infections is dominated by~$2dL$ plus the sum of~$L$ independent shifted geometric~$N_j$ with mean~$2d$, therefore~$\pi_1$ is dominated by
\begin{equation}
\label{eq:infected-1}
\bar N = (2d + N_1) + \cdots + (2d + N_L) = 2dL + N_1 + \cdots + N_L.
\end{equation}
 It follows from~\eqref{eq:offspring-3} that~$\bar N$ has probability-generating function
\begin{equation}
\label{eq:infected-2}
  G_{\bar N} (s) = E (s^{\bar N}) = (s^{2d} G_N (s))^L = \bigg(\frac{s^{2d}}{1 + 2d (1 - s)} \bigg)^L.
\end{equation}
 Now, letting~$\phi (s) = (1 + 2d (1 - s)) \,s^{4d}$ and~$s_0 = 1 + 1/4d > 1$,
\begin{equation}
\label{eq:infected-3}
\begin{array}{rcl}
\displaystyle \phi (s_0) & \n = \n &
\displaystyle \bigg(1 + 2d \bigg(1 - 1 - \frac{1}{4d} \bigg) \bigg) \bigg(1 + \frac{1}{4d} \bigg)^{4d} = \frac{1}{2} \bigg(1 + \frac{1}{4d} \bigg)^{4d} \vspace*{8pt} \\ & \n \geq \n &
\displaystyle \frac{1}{2} \bigg(1 + {4d \choose 1} \bigg(\frac{1}{4d} \bigg) + {4d \choose 2} \bigg(\frac{1}{4d} \bigg)^2 \bigg) = 1 + \frac{1}{4} \bigg(1 - \frac{1}{4d} \bigg) > 1. \end{array}
\end{equation}
 Using~\eqref{eq:infected-2}--\eqref{eq:infected-3} and Markov's inequality, recalling that~$s_0 > 1$, we get
\begin{equation}
\label{eq:infected-4}
\begin{array}{rcl}
  P (\bar N > K) & \n = \n &
  P (s_0^{\bar N} > s_0^K) \leq s_0^{-K} E (s_0^{\bar N}) \vspace*{4pt} \\ & \n = \n &
  s_0^{- 6dL} G_{\bar N} (s_0) = (1 / \phi (s_0))^L \leq (1 + (1 - 1/4d)/4)^{-L}. \end{array}
\end{equation}
 Finally, using Lemmas~\ref{lem:offspring}--\ref{lem:progeny} as well as~\eqref{eq:infected-1} and~\eqref{eq:infected-4}, we conclude that
 $$ \begin{array}{rcl}
      P (\pi_1 > K) & \n \leq \n &
      P (\pi_2 > L) + P (\pi_1 > K \,| \,\pi_2 \leq L) \vspace*{4pt} \\ & \n \leq \n &
      P (\pi > L) + P (\bar N > K) \leq
      C_1 s_1^{-L} + (1 + (1 - 1/4d)/4)^{-L}  \vspace*{4pt} \\ & \n \leq \n &
      (C_1 + 1)(s_1^{1 / 6d} \wedge (1 + (1 - 1/4d)/4)^{1 / 6d})^{-K} \end{array} $$
 for all~$K \in \N$, which proves the lemma.
\end{proof}


\section{Block construction}
\label{sec:block}
 In this section, we prove that, under assumption~\eqref{eq:assumption}, the healthy space-time blocks for the process properly rescaled in space and time dominates the set of wet sites in an oriented site percolation process with parameter close to one.
 To define the percolation process, let~$\Lat = \Z^d \times \N$, which we turn into a directed graph~$\vec \Lat$ by putting arrows
 $$ \begin{array}{rcl} (m, n) \to (m', n') & \Longleftrightarrow & |m_1 - m_1'| + \cdots + |m_d - m_d'| + |n - n'|= 1 \ \ \hbox{and} \ \ n \leq n'. \end{array} $$
 In particular, starting from each site~$(m, n)$, there are~$2d + 1$ arrows:~$2d$ spatial arrows going in each of the~$2d$ spatial directions, and one temporal arrow going upward.
 Oriented site percolation with parameter~$1 - \ep$ is the stochastic process in which each site is open with probability~$1 - \ep$ and closed with probability~$\ep$.
 To couple the interacting particle system with the percolation process, for each~$(m, n) \in \Lat$, we consider the space-time blocks
 $$ \begin{array}{rcl}
    \A_{m, n} = (2mK, nK) + \A & \hbox{where} & \A = [-K, K]^d \times [K, 2K], \vspace*{4pt} \\
    \B_{m, n} = (2mK, nK) + \B & \hbox{where} & \B = [-2K, 2K]^d \times [0, 2K], \end{array} $$
 and the event~$H_{m, n}$ that block~$\A_{m, n}$ is healthy:
 $$ H_{m, n} = \{\xi_t (x) = 0 \ \hbox{for all} \ (x, t) \in \A_{m, n} \}, $$
 and prove that, for all~$\ep > 0$, the event~$H_{m, n}$ occurs with probability at least~$1 - \ep$ for all~$K$ sufficiently large, regardless of the configuration outside~$\B_{m, n}$.
 To do this, the idea is to control the number of infection paths starting from the bottom or the periphery of the block, and the space-time length of these paths.
 Defining the bottom and periphery of block~$\B$ as
 $$ \begin{array}{rcl}
    \B_- & \n = \n & [-2K, 2K]^d \times \{0 \}, \vspace*{4pt} \\
    \B_+ & \n = \n & ([-2K, 2K]^d \setminus [-2K + 1, 2K - 1]^d) \times [0, 2K], \end{array} $$
 the sets~$\Lambda_-$ and~$\Lambda_+$ of entry points~(space-time points through which the infection enters the block) at the bottom and around the periphery of the block are given by
 $$ \begin{array}{rcl}
    \Lambda_- & \n = \n & \{(x, t) \in \B_- : (t = 0 \ \hbox{and} \ \xi_t (x) \neq 0) \}, \vspace*{4pt} \\
    \Lambda_+ & \n = \n & \{(x, t) \in \B_+ : (t = 0 \ \hbox{and} \ \xi_t (x) \neq 0) \ \hbox{or} \ (t > 0 \ \hbox{and} \ \xi_{t-} (x) < \xi_t (x) = 1) \}. \end{array} $$
 The next lemma controls the number of infection paths.
\begin{lemma}
\label{lem:entry}
 Letting~$M = 4dK (4K + 1)^{d - 1}$, there exist~$C_3 < \infty$ and~$s_3 > 1$ such that
 $$ P (\card (\Lambda_- \cup \Lambda_+) > 3M) \leq C_3 s_3^{-M} \quad \hbox{for all} \quad K \in \N. $$
\end{lemma}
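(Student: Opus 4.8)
The plan is to split $\Lambda_- \cup \Lambda_+$ into the part sitting on the time-zero slice and the part of $\Lambda_+$ consisting of the fresh infections that occur at positive times on the spatial boundary of the block, then to bound the first deterministically and the second using only the recovery marks of the graphical representation. By translation invariance we may take $\B = \B_{0,0}$, so the time window is $[0, 2K]$, and we write $S = [-2K, 2K]^d \setminus [-2K + 1, 2K - 1]^d$ for the spatial projection of $\B_+$. Since $S \subseteq [-2K, 2K]^d$, the time-zero part of $\Lambda_- \cup \Lambda_+$ lies in $[-2K, 2K]^d \times \{0\}$, hence has at most $(4K + 1)^d$ elements; and, applying the mean value theorem to $t \mapsto t^d$ on $[4K - 1, 4K + 1]$, the shell satisfies $\card (S) = (4K + 1)^d - (4K - 1)^d \leq 2d (4K + 1)^{d - 1}$.

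For the remaining entry points, the key observation is that a boundary site can only be freshly infected after recovering from a previous infection: whenever $\xi_{t-}(x) = 0$ and $\xi_t(x) = 1$ at two successive such times, the state of $x$ must drop from $1$ to $0$ in between, which forces at least one recovery mark ($\times$) at $x$. Therefore the number of transitions $0 \to 1$ that a given $x \in S$ undergoes during $[0, 2K]$ is at most $1 + R_x$, where $R_x$ denotes the number of recovery marks at $x$ in $[0, 2K]$. These marks being those of independent rate-one Poisson processes, the $R_x$ are independent with $R_x \sim \poisson (2K)$, so, setting $\lambda := 2K \,\card (S)$, which is at most $M$ because $\card (S) \leq 2d (4K + 1)^{d - 1}$, we obtain
$$ \card \big(\Lambda_+ \setminus (\Z^d \times \{0 \}) \big) \leq \card (S) + \sum_{x \in S} R_x \qquad \hbox{and} \qquad \sum_{x \in S} R_x \sim \poisson (\lambda). $$
Crucially, this bound involves neither $\lambda_1$ nor $\lambda_2$, so it remains valid when $\lambda_2 = \infty$; this is exactly why one counts recovery marks rather than infection arrows, whose rate blows up in that limit.

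Combining the two steps gives $\card (\Lambda_- \cup \Lambda_+) \leq D + \sum_{x \in S} R_x$ with $D := (4K + 1)^d + \card (S)$ deterministic. Since $M = 4dK (4K + 1)^{d - 1}$, a short computation shows $D / M \to 1 / d$ and $\lambda / M \to 1$ as $K \to \infty$, so there is a finite $K_0 = K_0 (d)$ such that $3M - D \geq \tfrac{3}{2} \lambda$ for all $K \geq K_0$. Markov's inequality applied to the exponential moment $E (s^{\poisson (\lambda)}) = e^{\lambda (s - 1)}$, exactly as in the proof of Lemma~\ref{lem:progeny} and with, say, $s = 3 / 2$, then yields $P (\sum_{x \in S} R_x > 3M - D) \leq s_3^{- M}$ for some $s_3 > 1$ independent of $K$ and all $K \geq K_0$; the finitely many smaller values of $K$ are absorbed into $C_3$, which proves the lemma. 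The only genuine work here is this last step: one has to check that the combinatorial constants leave a fixed multiplicative gap between the typical size of the bound and the threshold $3M$, so that the Poisson deviation estimate decays at a rate uniform in $K$; the conceptual ingredient, that the quantity to count is recoveries and not infection attempts, is what keeps the argument insensitive to the value of $\lambda_2$.
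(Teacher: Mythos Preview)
Your proof is correct and follows essentially the same route as the paper: bound the time-zero contribution deterministically by $(4K+1)^d$, bound the positive-time entries on the shell by counting recovery marks (a Poisson variable with mean at most $M$), and finish with an exponential Markov/Chernoff bound. The only cosmetic differences are that you keep the extra ``$+1$ per boundary site'' when counting first infections (the paper absorbs this silently), you bound $\card(S)$ via the mean value theorem rather than the crude face count, and you phrase the last step asymptotically (choosing $K_0$ and absorbing small $K$ into $C_3$) instead of with the explicit choice $s=2$ the paper makes; your final sentence would be cleaner if you combined $\lambda\le M$ with $3M-D\ge 2M$ directly in the Chernoff bound rather than passing through $3M-D\ge\tfrac32\lambda$, but the argument goes through either way.
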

\begin{proof}
 Note that the number of entry points at the bottom of~$\B$, including the entry points at time~0 around the periphery, is~(deterministically) bounded by
\begin{equation}
\label{eq:entry-1}
\card (\Lambda_-) \leq \card (\B_-) = (4K + 1)^d \leq M.
\end{equation}
 In addition, the argument in the proof of Lemma~\ref{lem:offspring} implies that the number of additional entry points around the periphery, excluding the entry points at the bottom, is dominated by the number of recovery marks around the periphery.
 The periphery consists of~$2d$ faces.
 Each face has less than~$(4K + 1)^{d - 1}$ sites and height~$2K$ units of time.
 Recalling also that the recovery marks occur at each site at rate one, we deduce that~$\card (\Lambda_+ \setminus \Lambda_-)$ is dominated by
 $$ Z = \poisson (2d \times (4K + 1)^{d - 1} \times 2K \times 1) = \poisson (M), $$
 whose probability-generating function is given by
 $$ G_Z (s) = E (s^Z) = \sum_{i = 0}^{\infty} \frac{s^i M^i}{i!} \,e^{-M} = e^{-M (1 - s)}. $$
 Using~\eqref{eq:entry-1} and Markov's inequality, we deduce that
 $$ \begin{array}{rcl}
      P (\card (\Lambda_- \cup \Lambda_+) > 3M) & \n \leq \n &
      P (\card (\Lambda_+ \setminus \Lambda_-) > 2M) \vspace*{4pt} \\ & \n \leq \n &
      P (Z > 2M) = P (2^Z > 4^M) \vspace*{4pt} \\ & \n \leq \n &
      E (2^Z) / 4^M = 4^{-M} G_Z (2) = (4/e)^{-M}, \end{array} $$
 which proves the lemma for~$C_3 = 1$ and~$s_3 = 4/e > 1$.
\end{proof} \vspace*{8pt} \\
 The next lemma controls the length in space and time of the infection paths, showing an exponential decay.
 To state the result, we identify the process~$\xi^{(0, 0)}$ with the set of infected sites:
 $$ \xi^{(0, 0)} = \{(x, t) \in \Z^d \times [0, \infty) : \xi_t^{(0, 0)} (x) \neq 0 \}. $$
\begin{lemma}
\label{lem:decay}
 Assume~\eqref{eq:assumption}.
 Then, there exist~$C_4 < \infty$ and~$s_4 > 1$ such that
 $$ P (\xi^{(0, 0)} \not \subset [-K, K]^d \times [0, K]) \leq C_4 s_4^{-K} \quad \hbox{for all} \quad K \in \N. $$
\end{lemma}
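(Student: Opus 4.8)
The plan is to control the spatial extent and the temporal extent of the infection separately. Writing $\xi^{(0,0)}$ also for the random set of infected space-time points, and letting $T = \sup \{t \geq 0 : \xi_t^{(0,0)} (x) \neq 0 \ \hbox{for some} \ x \in \Z^d \}$ be the extinction time, any point of $\xi^{(0,0)}$ outside $[-K,K]^d \times [0,K]$ either lies outside $[-K,K]^d \times [0,\infty)$ or has time coordinate exceeding $K$, so
$$ \{\xi^{(0,0)} \not\subset [-K,K]^d \times [0,K] \} \subseteq \{\xi^{(0,0)} \not\subset [-K,K]^d \times [0,\infty) \} \cup \{T > K \}, $$
and it suffices to bound the probability of each event on the right by $C s^{-K}$ for some $C < \infty$ and $s > 1$, the finitely many small values of $K$ being absorbed into the constant.

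For the spatial bound, among the four local transitions only $0 \to 1$ can enlarge the set of sites that have ever been infected, and each such transition occurs along an edge of $\Z^d$, so each of the $\pi_1$ transitions $0 \to 1$ increases $\rho_t = \max \{\norm{x} : \xi_s^{(0,0)} (x) \neq 0 \ \hbox{for some} \ s \leq t \}$, with $\norm{x} = \max_i |x_i|$, by at most one. Since $\rho_0 = 0$, this gives $\rho_t \leq \pi_1$ for all $t$, hence $\{\pi_1 \leq K \} \subseteq \{\xi^{(0,0)} \subset [-K,K]^d \times [0,\infty) \}$, and Lemma~\ref{lem:infected} yields $P (\xi^{(0,0)} \not\subset [-K,K]^d \times [0,\infty)) \leq P (\pi_1 > K) \leq C_2 s_2^{-K}$.

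For the temporal bound, since at least one site is infected at every time before extinction,
$$ T \leq \int_0^\infty \card \{x \in \Z^d : \xi_t^{(0,0)} (x) \neq 0 \} \,dt = D_1 + D_2 + \cdots + D_{\pi_1 + 1}, $$
where $D_1, D_2, \ldots$ are the durations of the $\pi_1 + 1$ maximal intervals during which a given site is infected: one per transition $0 \to 1$, plus the one started by the symptomatic individual at the origin. Each such interval ends at the first recovery mark ($\times$) at its site after the interval's onset, and the intervening $1 \to 2$ transitions leave the rate-one recovery clock untouched; hence, revealing the intervals one at a time in breadth-first order of the genealogy induced by the $0 \to 1$ transitions, the onset of each interval is a stopping time for the recovery process at its site, so by memorylessness each newly revealed $D_j$ is $\exponential (1)$ and independent of all previously revealed data --- and this persists in the limit $\lambda_2 = \infty$. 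Completing the list with further independent $\exponential (1)$ variables when $\pi_1 + 1 < L$, the $D_1, \ldots, D_L$ are i.i.d.\ $\exponential (1)$; the event $\{\pi_1 + 1 > L \}$ is measurable with respect to $D_1, \ldots, D_L$ together with the infection and mutation clocks of the first $L$ intervals; and $D_1 + \cdots + D_{\pi_1 + 1} \leq D_1 + \cdots + D_L$ on $\{\pi_1 + 1 \leq L \}$. Taking $L = \lceil K/2 \rceil$, a standard Chernoff bound gives $P (D_1 + \cdots + D_L > K) \leq c_0^{\,K}$ for some $c_0 < 1$ and all large $K$, while $P (\pi_1 + 1 > L) \leq C_2 s_2 \,s_2^{-K/2}$ by Lemma~\ref{lem:infected}; thus $P (T > K) \leq C_2 s_2 \,s_2^{-K/2} + c_0^{\,K} \leq C s^{-K}$.

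The one genuinely delicate point is this last step: the number of infection intervals $\pi_1 + 1$ and their durations are not independent --- conditioning on many infections biases the durations upward --- so one cannot merely compound the exponential decay of $\pi_1$ with a Gamma tail; the sequential-revelation scheme is what makes this rigorous, by placing $\{\pi_1 + 1 > L \}$ in a filtration for which each new $D_j$ is a fresh independent $\exponential (1)$. Granting that, everything rests on the Chernoff bound and the estimates of Lemma~\ref{lem:infected} (and, through its proof, Lemma~\ref{lem:progeny}), and combining the two parts gives the claim with $s_4 = \sqrt{s_2} \wedge \sqrt{1/c_0} > 1$ and $C_4$ suitably large.
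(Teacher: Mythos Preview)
Your proof is correct and follows the same two-part strategy as the paper: control the spatial extent via the number of infection events, and control the temporal extent by writing the cumulative infection time as a random sum of $\exponential(1)$ durations and conditioning on $\pi_1 \leq K/2$. The only differences are minor: for the spatial bound you invoke $\pi_1$ through Lemma~\ref{lem:infected} where the paper uses the slightly sharper observation that reaching distance $K$ forces $\pi_2 > K$ and appeals to Lemmas~\ref{lem:offspring}--\ref{lem:progeny} directly, and you explicitly flag (and address via the sequential-revelation scheme) the dependence between $\pi_1$ and the $D_j$, a point the paper's proof simply asserts away.
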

\begin{proof}
 This is similar to the proof of~\cite[Lemma~11]{lanchier_mercer_yun}.
 To begin with, we focus on the exponential decay of the infection in space.
 Because only symptomatic individuals are contagious and because these individuals can only infect their nearest neighbors, for the infection to travel a distance~$K$, we must have~$\pi_2 > K$.
 In particular, it follows from Lemmas~\ref{lem:offspring}--\ref{lem:progeny} that
\begin{equation}
\label{eq:decay-1}
  P (\xi^{(0, 0)} \not \subset [-K, K]^d \times [0, \infty)) \leq P (\pi_2 > K) \leq P (\pi > K) \leq C_1 s_1^{-K}.
\end{equation}
 To deal with the exponential decay of the infection in time, note first that, because sites can only get infected from infected neighbors and so the infection cannot appear spontaneously, the cumulative duration of the infection is larger than the time to extinction of the infection:
\begin{equation}
\label{eq:decay-2}
  T = \int_0^{\infty} \sum_{x \in \Z^d} \ind \{\xi_t^{(0, 0)} (x) \neq 0 \} \,dt \geq \inf \{t : \xi_t^{(0, 0)} = \varnothing \}.
\end{equation}
 To control~$T$, recall that~$\pi_1$ is the number of times a~0 turns into a~1.
 Ordering these events chronologically, the time the corresponding~1 newly created remains infected is exponentially distributed with parameter one.
 In particular, with the initial symptomatic individual, the duration~$T$ is the sum of~$1 + \pi_1$ independent exponential random variables~$T_j$ with parameter one:
\begin{equation}
\label{eq:decay-3}
  T = T_0 + T_1 + T_2 + \cdots + T_{\pi_1}.
\end{equation}
 Note also that the probability-generating function of the~$T_j$'s is given by
 $$ G_{T_j} (s) = E (s^{T_j}) = \int_0^{\infty} s^t e^{-t} \,dt = \bigg[\frac{s^t e^{-t}}{\ln (s/e)} \bigg]_0^{\infty} = \frac{1}{1 - \ln (s)} \quad \hbox{for all} \quad s < e, $$
 therefore, on the event that~$\pi_1 \leq K/2$,
 $$ G_T (s) = E (s^T) \leq (G_{T_j} (s))^{K/2 + 1} = \bigg(\frac{1}{1 - \ln (s)} \bigg)^{K/2 + 1} \quad \hbox{for all} \quad 1 < s < e. $$
 Using Markov's inequality and~$\sqrt{e} \in (1, e)$, we deduce that
\begin{equation}
\label{eq:decay-4}
\begin{array}{rcl}
  P (T > K \,| \,\pi_1 \leq K/2) & \n = \n &
  P (e^{T/2} > e^{K/2} \,| \,\pi_1 \leq K/2) \vspace*{4pt} \\ & \n \leq \n &
  E (e^{T/2} \,| \,\pi_1 \leq K/2) \,e^{-K/2} = G_T (\sqrt{e}) \,e^{-K/2} \vspace*{4pt} \\ & \n \leq \n &
  (1 - \ln (\sqrt{e}))^{- (K/2 + 1)} \,e^{-K/2} = 2 (e/2)^{-K/2}. \end{array}
\end{equation}
 Combining~\eqref{eq:decay-2}--\eqref{eq:decay-4} and Lemma~\ref{lem:infected}, we get the exponential decay in time
\begin{equation}
\label{eq:decay-5}
\begin{array}{rcl}
  P (\xi^{(0, 0)} \not \subset \Z^d \times [0, K]) & \n \leq \n &
  P (T > K) \vspace*{4pt} \\ & \n \leq \n &
  P (\pi_1 > K/2) + P (T > K \,| \,\pi_1 \leq K/2) \vspace*{4pt} \\ & \n \leq \n &
  C_2 s_2^{-K/2} + 2 (e/2)^{-K/2}. \end{array}
\end{equation}
 Finally, using~\eqref{eq:decay-1} and~\eqref{eq:decay-5}, we conclude that
 $$ \begin{array}{rcl}
      P (\xi^{(0, 0)} \not \subset [-K, K]^d \times [0, K]) & \n \leq \n &
      C_1 s_1^{-K} + C_2 s_2^{-K/2} + 2 (e/2)^{-K/2} \vspace*{4pt} \\ & \n \leq \n &
     (C_1 + C_2 + 2)(s_1 \wedge \sqrt{s_2} \wedge \sqrt{e/2})^{-K}, \end{array} $$
 which proves the lemma.
\end{proof} \vspace*{8pt} \\
 Using the exponential decay of the number of infection paths in Lemma~\ref{lem:entry} and the exponential decay of the space-time length of the infection paths in Lemma~\ref{lem:decay}, we can now prove that the events~$H_{m, n}$ occur with probability arbitrarily close to one.
\begin{lemma}
\label{lem:H}
 Assume~\eqref{eq:assumption}.
 Then, for all~$\ep > 0$, there exists~$K (\ep, \gamma) < \infty$ such that, regardless of the configuration of the process outside the block~$\B_{m, n}$,
 $$ P (H_{m, n}) \geq 1 - \ep / 2 \quad \hbox{for all} \quad (m, n) \in \Lat \ \hbox{and} \ K \geq K (\ep, \gamma). $$
\end{lemma}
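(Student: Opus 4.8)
The plan is to reduce, by translation invariance of the graphical representation in space and in time, to the case~$(m, n) = (0, 0)$, so that~$\B = \B_{0, 0} = [-2K, 2K]^d \times [0, 2K]$ and~$\A = \A_{0, 0} = [-K, K]^d \times [K, 2K]$; since the estimates below depend neither on~$(m, n)$ nor on the configuration of the process outside~$\B$, the conclusion will then follow in general. The key point is that, because the infection cannot appear spontaneously and only symptomatic sites infect their nearest neighbors, if block~$\A$ fails to be healthy then the infection must travel into~$\A$ starting from one of the entry points in~$\Lambda = \Lambda_- \cup \Lambda_+$. I would then combine a bound on the number of entry points, from Lemma~\ref{lem:entry}, a bound on the probability that the infection emanating from a single entry point reaches~$\A$, from Lemma~\ref{lem:decay}, and a union bound.

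The underlying geometric observation is that~$\A$ is separated from the bottom of the block by~$K$ units of time and from its periphery by~$K$ units in space: every point of~$\A$ has all spatial coordinates in~$[-K, K]$, while every point of~$\B_+$ has some coordinate equal to~$\pm 2K$. So, if~$\A$ contains an infected space-time point, then, tracing the infection back to the entry point~$(x, t) \in \Lambda$ from which it originates, either~(i)~$t = 0$, in which case~$x \in [-2K, 2K]^d$ and the infection started at~$(x, 0)$ survives at least up to time~$K$, or~(ii)~$t > 0$, in which case~$x$ lies on the periphery and the infection started at~$(x, t)$ reaches distance at least~$K$ from~$x$, which, as in the proof of Lemma~\ref{lem:decay}, forces at least~$K$ symptomatic individuals. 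To pass to probabilities, I would use a standard coupling argument (compare the proof of~\cite[Lemma~11]{lanchier_mercer_yun}): conditionally on the set of entry points, the set of infected space-time points inside~$\B$ is contained in the union, over~$(x, t) \in \Lambda$, of independent copies of~$\xi^{(0, 0)}$ translated to~$(x, t)$, the domination only improving when the entry state is asymptomatic because asymptomatic sites are not contagious when~$\lambda_1 = 0$.

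Combining these ingredients, set~$M = 4dK (4K + 1)^{d - 1}$. On the event~$\{\card (\Lambda) \leq 3M \}$, there are at most~$(4K + 1)^d$ entry points at time~$0$ and at most~$3M$ entry points at positive times, so block~$\A$ is reached only if one of the at most~$(4K + 1)^d$ copies attached to time-$0$ entry points survives up to time~$K$ --- which by translation invariance and the temporal estimate~\eqref{eq:decay-5} occurs with probability at most~$C_2 s_2^{-K/2} + 2 (e/2)^{-K/2}$ --- or one of the at most~$3M$ copies attached to positive-time entry points reaches distance~$K$ --- which by translation invariance and the spatial estimate~\eqref{eq:decay-1} occurs with probability at most~$C_1 s_1^{-K}$. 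Adding the probability~$C_3 s_3^{-M}$ of the event~$\{\card (\Lambda) > 3M \}$ from Lemma~\ref{lem:entry}, a union bound gives
$$ 1 - P (H_{0, 0}) \leq C_3 \,s_3^{-M} + (4K + 1)^d \,(C_2 \,s_2^{-K/2} + 2 \,(e/2)^{-K/2}) + 3M \,C_1 \,s_1^{-K}, $$
uniformly in~$(m, n)$ and in the configuration outside~$\B$. Since~$M \to \infty$ as~$K \to \infty$ while~$M$ and~$(4K + 1)^d$ grow only polynomially in~$K$, the right-hand side tends to~$0$; choosing~$K (\ep, \gamma)$ so that it is at most~$\ep / 2$ for all~$K \geq K (\ep, \gamma)$ completes the proof.

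The step I expect to be the main obstacle is the coupling of the second paragraph. The basic contact process is additive, so there the domination of a multi-source infection by a union of single-source infections is immediate; here the~$1 \to 2$ transition destroys exact additivity, and one must argue with some care that the infection inside the block produced by the various entry points is still dominated by a union of independent translates of~$\xi^{(0, 0)}$. The rest --- the geometric separation, the union bound, and the fact that the polynomial factors~$M$ and~$(4K + 1)^d$ are absorbed by the exponential decays --- is routine once Lemmas~\ref{lem:entry} and~\ref{lem:decay} are available.
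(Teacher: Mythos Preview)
Your proposal is correct and follows essentially the same route as the paper: reduce by translation invariance to $(m,n)=(0,0)$, dominate the infection inside~$\B$ by the superposition of single-source processes started at the entry points, and combine Lemma~\ref{lem:entry} with Lemma~\ref{lem:decay} via a union bound. The only difference is cosmetic: the paper does not split the entry points into bottom versus periphery but simply observes that \emph{every} entry point in~$\B_-\cup\B_+$ is at space--time distance at least~$K$ from~$\A$, and then applies the combined bound of Lemma~\ref{lem:decay} once to get $P(H_{m,n}^c)\le C_3 s_3^{-M}+3M\,C_4 s_4^{-K}$; your two-term split $(4K+1)^d(\cdots)+3M\,C_1 s_1^{-K}$ works equally well but is slightly less economical. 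One small correction on the coupling you flag as the obstacle: the paper builds all the~$\xi^{(x,t)}$ from the \emph{same} graphical representation as~$\xi$ (not from independent copies), which makes the domination a pathwise statement and sidesteps the conditioning issue you raise.
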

\begin{proof}
 Let~$\xi$ be the asymptomatic contact process constructed from the graphical representation introduced in Section~\ref{sec:GR}, and for every space-time point~$(x, t)$, let
 $$ \xi^{(x, t)} = \{\xi_s^{(x, t)} : s \geq t \} $$
 be the process starting at time~$t$ with a single symptomatic individual at site~$x$ in an otherwise healthy population, constructed from the same graphical representation as~$\xi$.
 Note that processes starting from different space-time points do not interact.
 Defining the sets of entry points~$\Lambda_{\pm}$ as previously for~$\xi$, the construction directly implies that
 $$ \ind \{\xi_s (y) \neq 0 \} \leq \sum_{(x, t) \in \Lambda_- \cup \Lambda_+} \ind \{\xi_s^{(x, t)} (y) \neq 0 \} \quad \hbox{for all} \quad (y, s) \in \B. $$
 In words, in the block~$\B$ and with probability one, the superposition of the processes starting with a single symptomatic individuals at each entry point~(which allows multiple infected individuals per site) dominates the process~$\xi$.
 In particular, identifying as previously the processes with their infected space-time region, it follows from the coupling that
 $$ \xi \cap \A \neq \varnothing \quad \Longrightarrow \quad \xi^{(x, t)} \cap \A \neq \varnothing \ \hbox{for some} \ (x, t) \in \Lambda_- \cup \Lambda_+. $$
 Using translation invariance, as well as Lemmas~\ref{lem:entry}--\ref{lem:decay}, and that the space-time distance between any entry point in~$\B$ is at distance at least~$K$ of block~$\A$, we deduce that
 $$ \begin{array}{rcl}
      P (H_{m, n}^c) & \n = \n &
      P (H_{0, 0}^c) = P (\xi \cap \A \neq \varnothing) \leq P (\card (\Lambda_- \cup \Lambda_+) > 3M) \vspace*{4pt} \\ && \hspace*{10pt} + \
      P (\exists (x, t) \in \Lambda_- \cup \Lambda_+ : \xi^{(x, t)} \cap \A \neq \varnothing \,| \,\card (\Lambda_- \cup \Lambda_+) \leq 3M) \vspace*{4pt} \\ & \n \leq \n &
      P (\card (\Lambda_- \cup \Lambda_+) > 3M) + 3M P (\xi^{(0, 0)} \not \subset [-K, K]^d \times [0, K]) \vspace*{4pt} \\ & \n \leq \n &
      C_3 s_3^{-M} + 3M C_4 s_4^{-K}. \end{array} $$
 Since this goes to zero as~$K \to \infty$, this proves the lemma.
\end{proof} \vspace*{8pt} \\
 The previous lemma implies the existence of a collection of good events~$G_{m, n}$ that only depend on the graphical representation of the interacting particle system in the space-time block~$\B_{m, n}$ such that the following holds: For all~$\ep > 0$, there exists~$K (\ep) < \infty$ such that 
 $$ P (G_{m, n}) \geq 1 - \ep / 2 \ \ \hbox{and} \ \ G_{m, n} \subset H_{m, n} \quad \hbox{for all} \quad (m, n) \in \Lat \ \hbox{and} \ K \geq K (\ep). $$
 Noticing also that, for~$(m, n), (m', n') \in \Lat$,
 $$ \B_{m, n} \cap \B_{m', n'} = \varnothing \quad \Longleftrightarrow \quad |m - n'| \vee |n - n'| > 2, $$
 we obtain the following lemma.
\begin{lemma}
\label{lem:perco-1}
 Assume that~$\gamma < 1 / (4d - 1)$ and~$\lambda_1 = 0$.
 Then, for all~$\ep > 0$ and all~$K \geq K (\ep, \gamma)$, the set of healthy blocks~$\A_{m, n}$ dominates the set of open sites in a 2-dependent oriented site percolation process on the directed graph~$\vec{\Lat}$ with parameter~$1 - \ep / 2$.
\end{lemma}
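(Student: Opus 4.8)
The plan is to extract a percolation configuration directly from the collection of good events $G_{m, n}$ produced by Lemma~\ref{lem:H} and the paragraph following it. I would declare the site $(m, n) \in \Lat$ to be \emph{open} if the event $G_{m, n}$ occurs, and \emph{closed} otherwise. Since $G_{m, n} \subset H_{m, n}$ with probability one, every open site automatically has the property that its block $\A_{m, n}$ is entirely healthy, so the random set of open sites is contained in the set of healthy blocks; this pointwise inclusion \emph{is} the asserted stochastic domination, provided we exhibit the open/closed field as a $2$-dependent oriented site percolation process on $\vec \Lat$ with the right density. Hence only two things remain to be checked: a marginal lower bound and the range of dependence.

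The marginal bound is immediate from the statement recorded just after Lemma~\ref{lem:H}, which gives $P(G_{m, n}) \geq 1 - \ep / 2$ for every $(m, n) \in \Lat$ as soon as $K \geq K(\ep, \gamma)$; if one insists on the density being exactly $1 - \ep / 2$, one downgrades by an independent site-wise thinning, which does not enlarge the range of dependence. For that range, recall that by construction $G_{m, n}$ is a measurable function of the Poisson clocks of the graphical representation of Section~\ref{sec:GR} restricted to the space-time block $\B_{m, n}$ alone. Combining this with the elementary geometric identity $\B_{m, n} \cap \B_{m', n'} = \varnothing \Longleftrightarrow |m - m'| \vee |n - n'| > 2$, one sees that whenever $S, S' \subset \Lat$ are finite with $\min \{|v - w| : v \in S, \, w \in S'\} > 2$ in the sup-type norm $|m - m'| \vee |n - n'|$ on $\Lat$, the block families $\{\B_v : v \in S\}$ and $\{\B_w : w \in S'\}$ are disjoint, so $(G_v)_{v \in S}$ and $(G_w)_{w \in S'}$ are built from disjoint families of independent Poisson processes and are therefore independent. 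This is precisely what it means for the random field $(\ind_{G_{m, n}})$ to be $2$-dependent on $\vec \Lat$, and together with the marginal bound this gives the lemma.

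I do not expect a genuine obstacle here: this lemma is the bookkeeping step that repackages the probabilistic estimates of Lemmas~\ref{lem:entry}--\ref{lem:decay} (absorbed into $G_{m, n}$ through Lemma~\ref{lem:H}) into the combinatorial skeleton of an oriented site percolation process. The only point that needs any care is the geometry: one must verify from $\B_{m, n} = (2mK, nK) + [-2K, 2K]^d \times [0, 2K]$ that two such blocks overlap exactly when $|m_i - m_i'| \leq 2$ for every spatial coordinate $i$ and $|n - n'| \leq 2$, i.e., exactly when $|m - m'| \vee |n - n'| \leq 2$, so that the dependence range of $(G_{m, n})$ is \emph{exactly} $2$, and one obtains a $2$-dependent (not merely finitely dependent) percolation process.
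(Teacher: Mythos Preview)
Your argument is correct and is essentially the same as the paper's: the lemma is obtained directly from the properties of the events $G_{m,n}$ recorded after Lemma~\ref{lem:H} (the inclusion $G_{m,n}\subset H_{m,n}$, the marginal bound $P(G_{m,n})\geq 1-\ep/2$, and the block-disjointness criterion $\B_{m,n}\cap\B_{m',n'}=\varnothing\Leftrightarrow |m-m'|\vee|n-n'|>2$), and your proposal just spells out explicitly why these three facts yield a $2$-dependent percolation field dominated by the healthy blocks.
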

 The term~2-dependent in the lemma means that the states~(open or closed) of sites that are more than distance two apart are independent.


\section{Extinction when~$\gamma < 1 / (4d - 1)$ and~$\lambda_1 < \lambda_1 (\gamma)$}
\label{sec:extinction}
 This section is devoted to the proof of Theorem~\ref{th:extinction}, which relies on two ingredients:
 a perturbation argument to extend Lemma~\ref{lem:perco-1} to the process with~$\lambda_1 > 0$ small, and a percolation result showing the lack of percolation of the closed sites when~$\ep$ is small.
\begin{lemma}
\label{lem:perturbation}
 For all~$\ep > 0$ and~$K \in \N$, there exists~$\lambda_1 (\ep, K) > 0$ such that
 $$ P (\hbox{no type~1 arrows in} \ \B_{m, n}) \geq 1 - \ep / 2 \quad \hbox{for all} \quad \lambda_1 \leq \lambda_1 (\ep, K). $$
\end{lemma}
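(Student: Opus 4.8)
The plan is to reduce the statement to an elementary computation with Poisson processes. By the translation invariance of the graphical representation introduced in Section~\ref{sec:GR} --- the families of exponential clocks attached to the sites and to the oriented edges are independent and their laws are invariant under spatial and temporal shifts --- the event that there is no type~1 arrow in $\B_{m, n}$ has the same probability for every $(m, n) \in \Lat$. It therefore suffices to bound from below the probability that there is no type~1 arrow in $\B_{0, 0} = \B = [-2K, 2K]^d \times [0, 2K]$.

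First I would count the type~1 arrows that lie in the block. Any such arrow is of the form $(x, t) \to (y, t)$ with $t \in [0, 2K]$ and with $x$ or $y$ in $[-2K, 2K]^d$; since $[-2K, 2K]^d$ contains $(4K + 1)^d$ sites and each site belongs to $2d$ oriented edges, these arrows come from at most $c (K, d) := 4d (4K + 1)^d$ independent rate-$\lambda_1 / 2d$ Poisson processes, each restricted to the time interval $[0, 2K]$. By the superposition property, the total number $W$ of type~1 arrows in $\B$ is a Poisson random variable with mean at most
$$ \mu = c (K, d) \cdot \frac{\lambda_1}{2d} \cdot 2K = 4 \lambda_1 K (4K + 1)^d. $$
The conclusion is then immediate from $P (W = 0) = e^{-\mu} \geq 1 - \mu$: setting~$\lambda_1 (\ep, K) = \ep / (8K (4K + 1)^d)$ gives~$\mu \leq \ep / 2$, hence~$P (\hbox{no type~1 arrow in } \B_{m, n}) = P (W = 0) \geq 1 - \ep / 2$ whenever~$\lambda_1 \leq \lambda_1 (\ep, K)$.

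There is no genuine obstacle here --- this is the soft half of the perturbation argument, and the only point deserving a little care is the bookkeeping of which arrows count as being ``in'' the block (it is enough to include every type~1 arrow with an endpoint in~$[-2K, 2K]^d$ and time-coordinate in~$[0, 2K]$, since these are exactly the arrows that can influence the dynamics inside~$\B_{0, 0}$), together with the observation, furnished by translation invariance, that the bound does not depend on~$(m, n)$. The real work is postponed to the proof of Theorem~\ref{th:extinction}, where Lemma~\ref{lem:perturbation} is coupled with Lemma~\ref{lem:perco-1}: on the intersection of the good event~$G_{m, n}$ for the~$\lambda_1 = 0$ process and the event that there is no type~1 arrow in~$\B_{m, n}$, the~$\lambda_1 > 0$ process agrees with the~$\lambda_1 = 0$ process throughout~$\B_{m, n}$, so~$\A_{m, n}$ is still healthy; choosing~$\ep$ small and then~$\lambda_1$ small keeps the resulting closed sites subcritical and rules out percolation, hence extinction.
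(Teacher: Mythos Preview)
Your proof is correct and follows essentially the same route as the paper: both compute the number of type~1 arrows in the block as a Poisson random variable and choose~$\lambda_1(\ep,K)$ so that its mean is small enough to make the no-arrow probability at least~$1-\ep/2$. The only cosmetic differences are that you count arrows with either endpoint in the spatial box (yielding mean~$4\lambda_1 K(4K+1)^d$) while the paper counts outgoing arrows from sites in the box (mean~$2\lambda_1 K(4K+1)^d$), and you use the bound~$e^{-\mu}\geq 1-\mu$ whereas the paper solves~$e^{-\mu}=1-\ep/2$ exactly.
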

\begin{proof}
 Because block~$\B_{m, n}$ has~$(4K + 1)^d$ sites and height~$2K$, and because the type~1 arrows occur at rate~$\lambda_1$ at each site, the number of type~1 arrows in the block and the probability of no arrows in the block are given respectively by
 $$ Z = \poisson (2K (4K + 1)^d \times \lambda_1) \quad \hbox{and} \quad \exp (- 2K (4K + 1)^d \times \lambda_1). $$
 In particular, the lemma is satisfied for
 $$ \lambda_1 (\ep, K)  = - \frac{\ln (1 - \frac{\ep}{2})}{2K (4K + 1)^d} > 0. $$
 This completes the proof.
\end{proof} \vspace*{8pt} \\
 For all~$(m, n) \in \Lat$, define the events
 $$ \bar G_{m, n} = G_{m, n} \cap F_{m, n} \quad \hbox{where} \quad F_{m, n} = \{\hbox{no type~1 arrows in} \ \B_{m, n} \}. $$
 Note that, as previously, the event~$G_{m, n}$ is attached to the process with~$\lambda_1 = 0$, whereas the event~$\bar G_{m, n}$ is attached to the process with~$\lambda_1 > 0$.
 However, because the graphical representations of the processes with~$\lambda_1 = 0$ and~$\lambda_1 > 0$ match on the event~$F_{m, n}$, we again have
\begin{equation}
\label{eq:perturbation-1}
\bar G_{m, n} \subset H_{m ,n} \quad \hbox{for} \quad \lambda_1 > 0.
\end{equation}
 In addition, by Lemmas~\ref{lem:perco-1}--\ref{lem:perturbation}, for all~$\ep > 0$,
\begin{equation}
\label{eq:perturbation-2}
\begin{array}{rcl}
  P (\bar G_{m, n}) & \n \geq \n & 1 - P (G_{m, n}^c) - P (\hbox{a type~1 arrow in} \ \B_{m, n}) \vspace*{4pt} \\
                    & \n \geq \n & 1 - \ep / 2 - \ep / 2 = 1 - \ep \end{array}
\end{equation}
 for all~$K \geq K (\ep, \gamma)$ and~$\lambda_1 \leq \lambda_1 (\ep, K)$.
 Combining~\eqref{eq:perturbation-1}--\eqref{eq:perturbation-2}, we get
\begin{lemma}
\label{lem:perco-2}
 Let~$\gamma < 1 / (4d - 1)$.
 Then, for all~$\ep > 0$, all~$K \geq K (\ep, \gamma)$, and all~$\lambda_1 \leq \lambda_1 (\ep, K)$, the set of healthy blocks~$\A_{m, n}$ dominates the set of open sites in a 2-dependent oriented site percolation process on the directed graph~$\vec{\Lat}$ with parameter~$1 - \ep$.
\end{lemma}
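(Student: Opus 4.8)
The plan is to take the events $\bar G_{m,n} = G_{m,n} \cap F_{m,n}$ that were just introduced as the ``open site'' events of the percolation process, and to check the three properties a block construction requires. First I would record that $\bar G_{m,n}$ is measurable with respect to the graphical representation restricted to the space-time block $\B_{m,n}$: the event $G_{m,n}$ has this property by the conclusion of Lemma~\ref{lem:H} (it only involves the Poisson clocks inside $\B_{m,n}$), and $F_{m,n} = \{\hbox{no type~1 arrows in } \B_{m,n}\}$ obviously does as well. Second, by the union bound \eqref{eq:perturbation-2}, which only uses Lemma~\ref{lem:perco-1} and Lemma~\ref{lem:perturbation}, one has $P(\bar G_{m,n}) \geq 1 - \ep$ as soon as $K \geq K(\ep, \gamma)$ and $\lambda_1 \leq \lambda_1(\ep, K)$. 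Third, by \eqref{eq:perturbation-1}---which holds because the graphical representations of the $\lambda_1 = 0$ and $\lambda_1 > 0$ processes coincide on $F_{m,n}$, so that the inclusion $G_{m,n} \subset H_{m,n}$ upgrades to $\bar G_{m,n} \subset H_{m,n}$---on the event $\bar G_{m,n}$ the block $\A_{m,n}$ is entirely healthy for the process with $\lambda_1 > 0$.

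With these three facts in hand, I would declare the site $(m,n) \in \Lat$ to be \emph{open} if $\bar G_{m,n}$ occurs and \emph{closed} otherwise. The resulting random field $(\ind\{\bar G_{m,n}\})_{(m,n) \in \Lat}$ then has two features: its marginals are all at least $1 - \ep$ by the second fact; and it is $2$-dependent, since by the first fact the state of $(m,n)$ depends only on the clocks in $\B_{m,n}$, while, as recalled just before the lemma, $\B_{m,n} \cap \B_{m',n'} = \varnothing$ whenever $|m - m'| \vee |n - n'| > 2$, so two sites more than distance two apart have independent states. Hence this field is a $2$-dependent oriented site percolation process on $\vec \Lat$ with parameter at least $1 - \ep$ (one may independently thin it down to exactly $1 - \ep$ if one insists on equality, the $2$-dependence being preserved, but the statement as phrased does not require this). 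Finally, by the third fact every open site $(m,n)$ satisfies $H_{m,n}$, i.e.\ its block $\A_{m,n}$ is healthy, so the random set $\{(m,n) : H_{m,n}\}$ contains the set of open sites; this is exactly the claimed stochastic domination.

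There is essentially no hard step here: all of the probabilistic work was already carried out in Sections~\ref{sec:GW} and~\ref{sec:block} (the exponential decay estimates feeding into Lemma~\ref{lem:H}, hence into Lemma~\ref{lem:perco-1}) and in Lemma~\ref{lem:perturbation}. The only point that deserves care---and the reason for introducing the intermediate events $F_{m,n}$ at all---is the mismatch of probability spaces: $G_{m,n}$ is an event for the process with $\lambda_1 = 0$, whereas the block $\A_{m,n}$ we ultimately want healthy belongs to the process with $\lambda_1 > 0$; intersecting with $F_{m,n}$ forces the two graphical representations to agree on the relevant block, which is what makes \eqref{eq:perturbation-1} valid. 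One should also double-check that \eqref{eq:perturbation-2} is a genuine union bound over the two events $G_{m,n}^c$ and $\{\hbox{a type~1 arrow in } \B_{m,n}\}$, and that $K(\ep, \gamma)$ from Lemma~\ref{lem:perco-1} and $\lambda_1(\ep, K)$ from Lemma~\ref{lem:perturbation} are taken with the same $\ep$, so that the two $\ep/2$ errors add to $\ep$; both are immediate.
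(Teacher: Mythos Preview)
Your proposal is correct and follows exactly the paper's approach: the paper derives the lemma directly from \eqref{eq:perturbation-1}--\eqref{eq:perturbation-2} together with the observation that the events $\bar G_{m,n}$ are measurable with respect to the graphical representation in $\B_{m,n}$ and hence yield a $2$-dependent field. Your write-up is simply a more explicit unpacking of the same three ingredients (locality, probability bound, inclusion in $H_{m,n}$).
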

 Because the infection cannot appear spontaneously, it follows from the lemma that, for~$\A_{m, n}$ to be infected, there must be a directed path of infected blocks reaching~$\A_{m, n}$, and so a directed path of closed sites going from level zero to~$(m, n)$ in the percolation process.
 In particular, to deduce the theorem, the last step is to prove that, for all~$\ep > 0$ sufficiently small, the closed sites do not percolate, which is done in the next lemma.
\begin{lemma}
\label{lem:perco}
 Let~$\ep = (4d + 2)^{-5^{d + 1}} > 0$.
 Then, the probability of a directed path of closed sites of length~$n$ starting from the origin in the percolation process is bounded by~$(1/2)^n$.
\end{lemma}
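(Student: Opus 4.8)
The plan is a first-moment estimate. The number of directed paths of length~$n$ issued from the origin is at most~$(2d + 1)^n$, since every vertex of~$\vec\Lat$ has exactly~$2d + 1$ outgoing arrows; and since~$4d + 2 = 2 (2d + 1)$, it will suffice to show that, for each such path, the probability that all of its sites are closed is at most~$(4d + 2)^{-n}$, and then to apply the union bound.

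To bound the probability for one path~$v_0, v_1, \ldots, v_n$ (with~$v_0$ the origin; the path is assumed self-avoiding, which is the case the statement really concerns --- see the last paragraph), the idea is to extract from it a large sub-collection of sites whose states are mutually independent. Two facts are used. First, by the block construction the state of a site~$(m, n)$ depends only on the graphical representation inside~$\B_{m, n}$, and~$\B_{m, n} \cap \B_{m', n'} = \varnothing$ whenever~$\norm{(m, n) - (m', n')}_\infty > 2$; hence a family of sites at pairwise~$\ell^\infty$-distance larger than~$2$ has states that are functions of pairwise disjoint parts of the graphical representation, and so are mutually independent. Second, an~$\ell^\infty$-ball of radius~$2$ in~$\Z^d \times \N$ contains at most~$5^{d + 1}$ lattice points. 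I would then build~$S \subseteq \{v_0, \ldots, v_n\}$ greedily: repeatedly pick a site not yet discarded, put it into~$S$, and discard it together with every still-present site at~$\ell^\infty$-distance at most~$2$ from it. Because the path is self-avoiding, each round discards at most~$5^{d + 1}$ of the~$v_i$, so the procedure lasts at least~$(n + 1) / 5^{d + 1}$ rounds and produces a family~$S$ of pairwise~$\ell^\infty$-well-separated (hence, by the first fact, mutually independent) sites with~$\card (S) \geq (n + 1) / 5^{d + 1} > n / 5^{d + 1}$.

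Since the sites of~$S$ are independent and each is closed with probability~$\ep = (4d + 2)^{-5^{d + 1}} \in (0, 1)$, this gives
$$ P (v_0, \ldots, v_n \ \hbox{all closed}) \leq P (\hbox{every site of~$S$ is closed}) = \ep^{\card (S)} \leq \ep^{\,n / 5^{d + 1}} = (4d + 2)^{-n}. $$
Summing over the at most~$(2d + 1)^n$ directed paths of length~$n$ from the origin and using~$4d + 2 = 2 (2d + 1)$ then yields the bound~$(2d + 1)^n (4d + 2)^{-n} = (1/2)^n$, as claimed.

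The one delicate point is the extraction step, specifically the self-avoidance. The directed graph~$\vec\Lat$ contains~$2$-cycles --- any two spatial neighbors on the same time level are joined by arrows in both directions --- so a directed path allowed to repeat sites could bounce forever between two closed sites, and then the probability that all its sites are closed would not decay in~$n$ at all; the statement is thus really about self-avoiding paths, and self-avoidance is exactly what keeps the number of path-sites discarded at each greedy step bounded by the fixed quantity~$5^{d + 1}$. Everything else --- counting the paths, getting mutual independence from the~$2$-dependence, and the Markov/union bound --- is routine.
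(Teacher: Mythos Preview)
Your argument is correct and is essentially identical to the paper's own proof: count the at most~$(2d+1)^n$ self-avoiding directed paths, extract from each a subfamily of at least~$n/5^{d+1}$ sites at pairwise~$\ell^\infty$-distance greater than~$2$, use the~$2$-dependence to bound the probability all are closed by~$\ep^{n/5^{d+1}} = (4d+2)^{-n}$, and apply the union bound. Your explicit greedy extraction and the remark on why self-avoidance is needed add welcome detail but do not change the route.
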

\begin{proof}
 This is~\cite[Lemma~14]{lanchier_mercer_yun}.
 Because it is short, we briefly repeat the proof.
 The number of self-avoiding paths of length~$n$ starting from the origin is bounded by~$(2d + 1)^n$.
 In addition, letting~$\vec{p}$ be such a path, we can extract from~$\vec{p}$ a subset of~$n / 5^{d + 1}$ sites that are distance more than two apart.
 In particular, because the percolation process is 2-dependent,
 $$ P (\vec{p} \ \hbox{is closed}) \leq \ep^{n / 5^{d + 1}} = ((4d + 2)^{-5^{d + 1}})^{n / 5^{d + 1}} = (4d + 2)^{-n}. $$
 This, together with the bound on the number of paths, implies that the probability of a path of closed sites of length~$n$ starting from the origin is bounded by
 $$ (2d + 1)^n \,P (\vec{p} \ \hbox{is closed}) \leq (2d + 1)^n (4d + 2)^{-n} = (1/2)^n. $$
 This completes the proof.
\end{proof} \vspace*{8pt} \\
 The theorem directly follows from Lemmas~\ref{lem:perco-2}--\ref{lem:perco}.
 More precisely,
\begin{enumerate}
\item Let~$\ep = (4d + 2)^{-5^{d + 1}} > 0$ as in Lemma~\ref{lem:perco}. \vspace*{4pt}
\item Let~$\gamma < 1 / (4d - 1)$ then~$K (\ep, \gamma) < \infty$ as in Lemma~\ref{lem:perco-2} (or Lemmas~\ref{lem:H}--\ref{lem:perco-1}). \vspace*{4pt}
\item Let~$\lambda_1 (\gamma) = \lambda_1 (\ep, K) > 0$ as in Lemma~\ref{lem:perco-2} (or Lemma~\ref{lem:perturbation}).
\end{enumerate}
 Then, for all~$\lambda_1 \leq \lambda_1 (\gamma)$, the set of blocks~$\A_{m, n}$ that are potentially infected~(for which~$H_{m, n}$ does not occur) does not percolate, so the infection dies out.


\end{document}